\newtheorem{thm}{Theorem}[section]
\newtheorem{lem}[thm]{Lemma}
\newtheorem{prop}[thm]{Proposition}
\newcommand{\bbZ}{\mathbb{Z}}
\newcommand{\Hom}{\mathrm{Hom}}
\newcommand{\fF}{{\mathfrak{F}}}
\newcommand{\fL}{{\mathfrak{L}}}
\newcommand{\cK}{{\mathcal{K}}}
\def\L{{\cal   L}}
\def\L1#1{L^1(#1)}
\def\lef({\left(}
\def\rig){\right)}
\begin{document}

%-----------------------------------------------------------------

\title{Second Cohomology  Space of the Lie
Superalgebra of contact vector fields $\mathcal{K}(2)$ with coefficients
 in the superspace of weighted densities on $S^{1|2}$ }

\author{ Othmen Ncib \thanks{D\'epartement de
Math\'ematiques, Facult\'e des Sciences de Gafsa, Zarroug 2112
Gafsa, Tunisie. E.mail: othmenncib@yahoo.fr} }

\date{}

%--------------------------------------------------------------------
\maketitle
% ----------------------------------------------------------------

\begin{abstract}
We investigate the second cohomology space of the Lie superalgebra $\mathcal{K}(2)$ with coefficients in the superspace of weighted densities on the (1,2)-dimentional real superspace. We explicitly give cocycles spanning this cohomology space.
\end{abstract}

%%%%%%%%%%%%%%%%%%%%%%%%%%%%%%%%%%%%%%%%%%%%%%%%%%%%%%%%%%%%%%%%%%%%%%%%%%%%%
%%%%%%%%%%%%%%%%%%%%%%%%%%%%%%%%%%%%%%%%%%%%%%%%%%%%%%%%%%%%%%%%%%%%%%%%%%%%%%%
\section{Introduction}
%%%%%%%%%%%%%%%%%%%%%%%%%%%%%%%%%%%%%%%%%%%%%%%%%%%%%%%%%%%%%%%%%%%%%%%%%%%%%%%
%%%%%%%%%%%%%%%%%%%%%%%%%%%%%%%%%%%%%%%%%%%%%%%%%%%%%%%%%%%%%%%%%%%%%%%%%%%%%%%
\vskip0.2cm

Let $\mathfrak{vect}(1)$ be the Lie algebra of vector fields on $S^1$. Consider the 1-parameter deformation of the $\mathfrak{vect}(1)$-action
on $\mathcal{C}^{\infty}(S^1)$:
$$L^{\lambda}_{X\frac{d}{dx}}(f)=Xf'+\lambda X'f,$$
where $X,\;f\in\mathcal{C}^{\infty}(S^1)$ and $X':=\frac{dX}{dx}$. This deformation shows that on the level of Lie algebras (and similary below, for Lie superalgebras) it is natural to choose $\mathbb{C}$ as the ground fields.\\
\vskip0.2cm Denote by $\mathcal{F}_{\lambda}$ the $\mathfrak{vect}(1)$-module structure on $\mathcal{C}^{\infty}(S^1)$ denoted by $L^{\lambda}$ for a fixed $\lambda$. Geometrically, $\mathcal{F}_{\lambda}=\{fdx^{\lambda}/f\in\mathcal{C}^{\infty}(S^1)\}$ is the space of weighted densities of weighted $\lambda\in\mathbb{K}$. The space $\mathcal{F}_{\lambda}$ coincides with the space of vector fields, fuctions and differential 1-form for $\lambda=-1,\;0$ and 1, respectively.\\
\vskip0.2cm Now, we consider the superspace $S^{1|n}$ equipped with its standard contact structure 1-form $\alpha_n=dx+\displaystyle\sum_{i=1}^n\theta_id\theta_i$, and introduce the superspace $\mathfrak{F}^n_{\lambda}$ of $\lambda$-densities on the supespace $S^{1|n}$. Let $\mathcal{K}(n)$ be the Lie superalgebra of contact vector fields, $\mathfrak{F}^n_{\lambda}$ in naturally a $\mathcal{K}(n)$-module.\\
\vskip0.2cm  The study of $H^2(Vect(S^1),\mathcal{F}_{\lambda})$ is given in \cite{Fuchs}, and the super corresponding case $H^2(\mathcal{K}(1),\mathfrak{F}^1_{\lambda})$ is given by B. Agrebaoui, I. Basdouri and M. Boujelbene in \cite{bim}  . In this paper, we compute the second cohomology space of the $\mathcal{K}(2)$-module of tensor densities:
$$\mathfrak{F}^2_{\lambda}=\{F\alpha_2^{\lambda}\;:\;F\in\mathcal{C}^{\infty}(S^{1|2})\},$$
where the action of $\mathcal{K}(2)$ on $\mathfrak{F}^2_{\lambda}$ is given by the Lie derivatives:
$$\mathfrak{L}^{\lambda}_{v_F}(\Phi)=(v_F(G)+\lambda F'G)\alpha_2^{\lambda}$$
where $F,\;G\in\mathcal{C}^{\infty}(S^{1|2}),\;\Phi=G\alpha^{\lambda}_2$ and $F'=\partial_xF$. This space classify central extensions of
$\mathcal{K}(2)$ by tensor densities $\mathfrak{F}^2_{\lambda}$. \\

%%%%%%%%%%%%%%%%%%%%%%%%%%%%%%%%%%%%%%%%%%%%%%%%%%%%%%%%%%%%%%%%%%%%%%%%%%%%%%%
%%%%%%%%%%%%%%%%%%%%%%%%%%%%%%%%%%%%%%%%%%%%%%%%%%%%%%%%%%%%%%%%%%%%%%%%%%%%%%%
\section{Geometry of the superspace $S^{1|2}$}
%%%%%%%%%%%%%%%%%%%%%%%%%%%%%%%%%%%%%%%%%%%%%%%%%%%%%%%%%%%%%%%%%%%%%%%%%%%%%%%
%%%%%%%%%%%%%%%%%%%%%%%%%%%%%%%%%%%%%%%%%%%%%%%%%%%%%%%%%%%%%%%%%%%%%%%%%%%%%%%

%%%%%%%%%%%%%%%%%%%%%%%%%%%%%%%%%%%%%%%%%%%%%%%%%%%%%%%%%%%%%%%%%%%%%%%%%%%%
\subsection{The Lie superalgebra of contact vector fields on $S^{1|2}$}
%%%%%%%%%%%%%%%%%%%%%%%%%%%%%%%%%%%%%%%%%%%%%%%%%%%%%%%%%%%%%%%%%%%%%%%%%%%%%%%%%??

Let $S^{1\mid 2}$ be the superspace with local
coordinates $(x;\;\theta_1,\theta_2),$ where
$\theta=(\theta_1,\theta_2)$ are the odd variables. Any
contact structure on $S^{1\mid 2}$ can be given by the
following $1$-form:
\begin{equation}
\label{a} \alpha_n=dx+\sum_{i=1}^2\theta_id\theta_i.
\end{equation}

On the space
$C^{\infty}(S^{1|2})$, we
consider the contact bracket
\begin{equation}
\{F,G\}=FG'-F'G-\frac{1}{2}(-1)^{|F|}\sum_{i=1}^2{\overline{\eta}}_i(F)\cdot
{\overline{\eta}}_i(G),
\end{equation} where the superscript ' stands for $\frac{\partial}{\partial x},$
$\overline{\eta}_i=\frac{\partial}{\partial
{\theta_i}}-\theta_i\frac{\partial}{\partial x}$ and $|F|$ is the
parity of $F$. Note that the derivations $\overline{\eta}_i$ are
the generators of $2$-extended supersymmetry and generate the kernel
of the form (\ref{a}) as a module over the ring of smooth
functions. Let $\mathrm{Vect}(S^{1|2})$ be the
superspace of vector fields on $S^{1|2}$:
\begin{equation*}\mathrm{Vect}(S^{1|2})=\left\{F_0\partial_x
+ \sum_{i=1}^2 F_i\partial_i \mid ~F_i\in C^{\infty}(S^{1|2})~,
 i=1,2  \right\},\end{equation*}  and consider the
superspace $\mathcal{K}(2)$ of contact vector fields on
$S^{1|2}$. That is, $\mathcal{K}(2)$ is the superspace
of vector fields on $S^{1|2}$ preserving the distribution
singled out by the $1$-form $\alpha_2$: $$
\mathcal{K}(2)=\left\{X\in\mathrm{Vect}(S^{1|2})~|~\hbox{there
exists}~F\in C^{\infty}(S^{1|2})~ \hbox{such
that}~{L}_X(\alpha_2)=F\alpha_2\right\}. $$
where $L_X$ is the Lie derivative along the vector field $X.$

 The Lie superalgebra
$\mathcal{K}(2)$ is spanned by the fields of the form:
\begin{equation*}
X_F=F\partial_x
-\frac{1}{2}\sum_{i=1}^2(-1)^{|F|}{\overline{\eta}}_i(F){\overline{\eta}}_i,\;\text{where
$F\in C^{\infty}(S^{1|2})$.}
\end{equation*}
 The bracket in
$\mathcal{K}(2)$ can be written as:
\begin{equation*}
[X_F,\,X_G]=X_{\{F,\,G\}}.
\end{equation*}

For every contact vector field $X_F$, one define a
one-parameter family of first-order differential operators on
$C^{\infty}(S^{1|2})$:
\begin{equation}
\label{superaction} \fL^{\lambda}_{X_F}=X_F + \lambda F',\quad
\lambda\in \mathbb{K}.
\end{equation}
We easily check that
\begin{equation}
\label{crochet}
[\fL^{\lambda}_{X_F},\fL^{\lambda}_{X_G}]=\fL^{\lambda}_{X_{\{F,G\}}}.
\end{equation}
We thus obtain a one-parameter family of $\cK(2)$-modules on
$C^{\infty}(S^{1|2})$ that we denote $\mathfrak{F}^2_\lambda$,
the space of all weighted densities on
$C^{\infty}(S^{1|2})$ of weight $\lambda$ with respect to $\alpha_2$:
\begin{equation}
\label{densities} \fF^2_{\lambda}=\left\{F\alpha_2^{\lambda} \mid F
\in C^{\infty}(S^{1|2})\right\}.
\end{equation}
In particular, we have $\mathfrak
{F}_{\lambda}^0=\mathcal{F}_\lambda$.
 Obviously the adjoint
$\cK(2)$-module is isomorphic to the space of weighted densities
on $C^{\infty}(S^{1|2})$ of weight $-1.$
The case $n = 0$ corresponds to the classical
setting: $\cK(0)=\mathrm{Vect}(S^1)= \{ F\partial_x | F\in C^{\infty}(S^1) \}$.
Note that, the Lie superalgebra $\mathcal{K}(1)$ is isomorphic to  $$\mathcal{K}(1)_i =\{ X_F \in \mathcal{K}(2)~
 | ~\partial_iF=0,\;i=1,2\}.$$
 Therefore, the spaces of weighted densities $\mathfrak{F}_{\lambda}^n$ are also
 $\mathcal{K}(n-1)$-modules.
 In \cite{{imnm},{hnv}} it was proved that, as $\mathcal{K}(1)$-module, we have
\begin{equation}\label{ot1} \mathfrak{F}_{\lambda}^2\simeq
{\mathfrak{F}}_{\lambda}\oplus \Pi
(\mathfrak
{\mathfrak{F}}_{\lambda+\frac{1}{2}}).
\end{equation}
where $\Pi$ is the change of parity operator.

%%%%%%%%%%%%%%%%%%%%%%%%%%%%%%%%%%%%%%%%%%%%%%%%%%%%%%%%%%%%%%%%%%%%%%%%%%%%
\subsection{Lie superalgebra cohomology }
%%%%%%%%%%%%%%%%%%%%%%%%%%%%%%%%%%%%%%%%%%%%%%%%%%%%%%%%%%%%%%%%%%%%%%%%%%%%%%%%%
\vskip 0.2 cm
Let $\frak{g}=\frak{g}_0\oplus \frak{g}_1$
be a Lie superalgebra acting on a super vector space $V=V_0\oplus
V_1$. The space of $k$-cochaines with values in $V$ is the $\frak{g}$-module
 $$C^k(\frak{g},V):= \Hom(\wedge^k\frak{g};\, V).$$
 The {\it coboundary operator} $\delta_k:C^k(\frak{g},V)\longrightarrow C^{k+1}(\frak{g},V)$ is a $\frak{g}$-map
  satisfying $\delta_k\circ \delta_{k-1}=0.$ The kernel of $\delta_k,$ denoted $\hbox{Z}^k(\frak{g},V),$ is the space of $k$-cocycles, among them, the elements in the range of $\delta_{k-1}$ are called {\it $k$-coboundries.}
 We denote $\hbox{B}^k(\frak{g},V)$ the space of $k$-coboundries.

 By definition, the $k^{th}$ cohomology space is the quotient space
 $$\hbox{H}^k(\frak{g},V)=\hbox{Z}^k(\frak{g},V)/\hbox{B}^k(\frak{g},V).$$
We will only need the formula of $\delta_n$ (which will be simply denoted $\delta$) in degrees 0, 1 and
2: for $v\in C^0(\frak{g},V)=V,\, \delta v(x):= (-1)^{|x||v|}x.v,$ for $\Upsilon \in C^1(\frak{g},V),$
\begin{equation}
\label{}
\delta(\Upsilon)(x,y):=(-1)^{|x||\Upsilon|}x.\Upsilon(y)-(-1)^{|y|(|x|+|\Upsilon|)}y.\Upsilon(x)-\Upsilon([x,\,y]),
\end{equation}
and for $\Omega \in C^2(\frak{g},V),$
\begin{equation}
\label{}
\begin{array}{lllll}
\delta(\Omega)(x,y,z):=& (-1)^{|\Omega||x|}x.\Omega(y,z)-(-1)^{|y|(|\Omega|+|x|)}y.\Omega(x,z) +(-1)^{|z|(|\Omega|+|x|+|y|)}z.\Omega(x,y)\\[5pt]
&-\Omega([x,y],z)
+(-1)^{|y||z|}\Omega([x,z],y)-(-1)^{|x|(|y|+|z|)}\Omega([y,z],x),
\end{array}
\end{equation}
where $x,y,z\in \frak{g}.$

 $\Hom(\frak{g},\, V)$ is $\bbZ_2$-graded via
\begin{equation}
\label{grade} \Hom(\frak{g}, V)_b=\displaystyle \oplus_{a\in
\bbZ_2}\Hom(\frak{g}_a, V_{a+b}); \; b\in \bbZ_2.
\end{equation}
\hfill $\Box$
\vskip 0.2 cm In this paper, we study the differential cohomology spaces $H^2(\mathcal{K}(2),\mathfrak{F}^2_{\lambda})$. That is, we consider only cochains $(X_F,X_G)\mapsto\Omega(F_1,F_2)\alpha_2^{\lambda}$ where $\Omega$ is a differential operator.

%%%%%%%%%%%%%%%%%%%%%%%%%%%%%%%%%%%%%%%%%%%%%%%%%%%%%%%%%%%%%%%%%%%%%%%%%%%%%
%%%%%%%%%%%%%%%%%%%%%%%%%%%%%%%%%%%%%%%%%%%%%%%%%%%%%%%%%%%%%%%%%%%%%%%%%%%%%%%
\section{The space $H^2(\mathcal{K}(2),\mathfrak{F}^2_{\lambda})$}
%%%%%%%%%%%%%%%%%%%%%%%%%%%%%%%%%%%%%%%%%%%%%%%%%%%%%%%%%%%%%%%%%%%%%%%%%%%%%%%
%%%%%%%%%%%%%%%%%%%%%%%%%%%%%%%%%%%%%%%%%%%%%%%%%%%%%%%%%%%%%%%%%%%%%%%%%%%%%%%
\vskip0.2cm
Recall that the Lie superalgebra $\mathcal{K}(2)$ has two subsuperalgebras
$\mathcal{K}(1)_i \hbox{ for }~i=1,~2$ isomorphic to $\mathcal{K}(1)$ defined by
\begin{equation}\label{ot2}
\mathcal{K}(1)_i=\Big\{ X_{F}~|~~F=f_0+f_i\theta_i, \hbox{ where }
f_0,~f_i \in C^{\infty}(S^1)\Big\}.
\end{equation}

For $i=1,~2,$ let $\Im_{\lambda}^{1,i}$ be the $\mathcal{K}(1)_i$-module of
tensor densities of degree $\lambda$ on~$S_i^{1\mid 1},$ where
$S_i^{1\mid 1}$ is the superline with local coordinates
$(x,\theta_i).$\\
The main result in this paper is the following:\\
\begin{thm}\label{ot12}
\begin{equation}\label{ot13}
\hbox{H}_{diff}^2(\mathcal{K}(2),\mathfrak{F}_{\lambda}^2)\simeq\left\{
\begin{array}{ll}
\mathbb{K}^3&\makebox{ if }~\lambda=0,\\\mathbb{K}^2&\makebox{ if }~\lambda=1, \\ \mathbb{K} &\makebox{ if }~\lambda=3,\\
0&\makebox { otherwise }.
\end{array}
\right.\end{equation}
\vskip 0.2 cm The non trivial spaces $\hbox{H}_{diff}^2(\mathcal{K}(2),\mathfrak{F}_{\lambda}^2)$ are spanned by the following 2-cocycles:
\begin{equation}
\label{H2013}
\begin{array}{ll}
\bullet\; \displaystyle \Upsilon^0_1(X_F,X_G) =\displaystyle
(FG'-F'G)\\-\Big(\frac{1}{4}+\frac{3}{4}(-1)^{|F||G|}\Big)(-1)^{|F|(|G|+1)}\Big(\overline{\eta}_1(F)\overline{\eta}_1(G)+
\overline{\eta}_2(F)\overline{\eta}_2(G)\Big)
\\+\frac{1}{2}\Big((-1)^{|F|}\overline{\eta}_1\overline{\eta}_2(F)\overline{\eta}_2(G)-
\overline{\eta}_2(F)\overline{\eta}_1{\overline\eta}_2(G)\Big)\theta_1\\
+\frac{1}{2}\Big(\overline{\eta}_1(F)\overline{\eta}_1\overline{\eta}_2(G)-
(-1)^{|F|}\overline{\eta}_1\overline{\eta}_2(F)\overline{\eta}_1(G)\Big)\theta_2,\\
\\[5pt]
\bullet\;\displaystyle \Upsilon^0_2(X_F,X_G) =\displaystyle \Big(\overline{\eta}_1\overline{\eta}_2(F)G-F\overline{\eta}_1\overline{\eta}_2(G)\Big)\\
+\frac{1}{2}\Big((-1)^{|G|}\overline{\eta}_1(F)\overline{\eta}_1\overline{\eta}_2(G)-\overline{\eta}_1\overline{\eta}_2(F)\overline{\eta}_1(G)\Big)\theta_1\\
+\frac{1}{2}\Big((-1)^{|G|}\overline{\eta}_2(F)\overline{\eta}_1\overline{\eta}_2(G)-\overline{\eta}_1\overline{\eta}_2(F)\overline{\eta}_2(G)\Big)\theta_2\\
+2\Big(FG''-F''G\Big)\theta_1\theta_2\\
\\[5pt]
\bullet\;\displaystyle \Upsilon^0_3(X_F,X_G) =\displaystyle\Big(F'\overline{\eta}_1\overline{\eta}_2(G)-\overline{\eta}_1\overline{\eta}_2(F)G'\Big).
&\hbox{ if }
\lambda=0,\\
\\[5pt]
\bullet\;\displaystyle \Upsilon^1_0(X_F,X_G)
=\displaystyle \Big((-1)^{|F|}F'\overline{\eta}_1\overline{\eta}_2(G')-(-1)^{|G|}\overline{\eta}_1\overline{\eta}_2(F')G'\Big)\alpha_2\\
\\[5pt]
\bullet\;\displaystyle \Upsilon^1_1(X_F,X_G)=\displaystyle (-1)^{(|F|+|G|)}\Big(\overline{\eta}_1\overline{\eta}_2(F)\overline{\eta}_1\overline{\eta}_2(G')
-\overline{\eta}_1\overline{\eta}_2(F')\overline{\eta}_1\overline{\eta}_2(G)\Big)\alpha_2\\

 &\hbox{ if } \lambda=1,\\
\\[5pt]
\bullet\;\displaystyle \Upsilon^3_0(X_F,X_G)=\displaystyle \Big[(-1)^{|F|+1}\Big(\overline{\eta}_1(F'')\overline{\eta}_1(G'')+\overline{\eta}_2(F'')\overline{\eta}_2(G'')\Big)\\
+2\Big(\overline{\eta}_1\overline{\eta}_2(F'')\overline{\eta}_1\overline{\eta}_2(G')-
\overline{\eta}_1\overline{\eta}_2(F')\overline{\eta}_1\overline{\eta}_2(G'')\Big)
\\+6\Big(\overline{\eta}_1(F'')\overline{\eta}_2(G^{(3)})-\overline{\eta}_2(F^{(3)})\overline{\eta}_1(G'')\Big)\Big]\alpha_2^3
 &\hbox{ if } \lambda=3.
\\
\medskip
\end{array}
\end{equation}
\end{thm}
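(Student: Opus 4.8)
The proof proceeds in three stages: (i) reduce the computation of $\mathrm{H}^2_{diff}(\mathcal{K}(2),\mathfrak{F}^2_\lambda)$ to a finite‑dimensional linear‑algebra problem by exploiting the grading of $\mathcal{K}(2)$ by the Euler field $X_x$; (ii) single out the admissible weights $\lambda$ and the leading terms of a cocycle by restricting to the subalgebras $\mathcal{K}(1)_1,\mathcal{K}(1)_2$ and invoking the known $\mathcal{K}(1)$‑result of \cite{bim} together with the decomposition (\ref{ot1}); (iii) for each admissible $\lambda$ solve the cocycle equation $\delta\Omega=0$ explicitly, compute the coboundaries, and read off representatives.

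\textbf{Step 1: grading and normalisation.} The adjoint action of $X_x$ equips $\mathcal{K}(2)$ with a $\tfrac12\mathbb Z$-grading (with $X_{x^{k+1}}$, $X_{x^{k}\theta_i}$, $X_{x^{k}\theta_1\theta_2}$ homogeneous), and correspondingly $\mathfrak{F}^2_\lambda$ is graded; the coboundary $\delta$ preserves the total degree, and since $X_x$ acts semisimply, each graded component of the complex of \emph{differential} cochains $C^{\bullet}_{diff}(\mathcal{K}(2),\mathfrak{F}^2_\lambda)$ is finite dimensional and $\mathrm{H}^2_{diff}$ is the cohomology of the degree‑zero subcomplex. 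Thus every class has a differential representative $\Omega$ that is homogeneous of degree $0$, hence of bounded order in $F$ and $G$. Moreover, since the relevant low‑degree cohomology of the finite‑dimensional projective subalgebra $\mathfrak{osp}(2|2)\subset\mathcal{K}(2)$ (spanned by the $X_F$ with $F$ a weight‑appropriate polynomial of degree $\le 2$) with coefficients in $\mathfrak{F}^2_\lambda$ is under control, one may further normalise $\Omega$ modulo a coboundary so that it vanishes on $\mathfrak{osp}(2|2)$; the cocycle equation then becomes an equation on the much smaller relative complex $C^2(\mathcal{K}(2),\mathfrak{osp}(2|2);\mathfrak{F}^2_\lambda)$.

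\textbf{Step 2: admissible weights.} Restrict a degree‑zero cocycle $\Omega$ to $\mathcal{K}(1)_i$, $i=1,2$. By (\ref{ot1}), $\mathfrak{F}^2_\lambda\simeq\mathfrak{F}_\lambda\oplus\Pi(\mathfrak{F}_{\lambda+\frac12})$ as a $\mathcal{K}(1)$-module, so $\Omega|_{\mathcal{K}(1)_i}$ decomposes accordingly; by the computation of $\mathrm{H}^2(\mathcal{K}(1),\mathfrak{F}^1_\mu)$ in \cite{bim} (and of $\mathrm{H}^1$ of $\mathcal{K}(1)$ with density coefficients, needed to control coboundaries), the restriction is, up to a coboundary, a combination of the finitely many nonzero $\mathcal{K}(1)$-cocycles. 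These force $\lambda\in\{0,1,3\}$ and fix the $\theta$-independent part of $\Omega$ (and, by the same argument applied to the $\theta_1\theta_2$-component, its top part) up to scalars; for all other $\lambda$ this already shows $\Omega$ is a coboundary, giving $\mathrm{H}^2_{diff}=0$.

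\textbf{Step 3: solving the equation and identifying representatives.} For each $\lambda\in\{0,1,3\}$ write the general degree‑zero skew‑supersymmetric differential cochain $\Omega(X_F,X_G)=\sum c\,\partial^{a}\overline{\eta}_I(F)\,\partial^{b}\overline{\eta}_J(G)\,\theta^{K}\alpha_2^{\lambda}$ of the allowed order, impose $\delta\Omega=0$ via the explicit degree‑$2$ coboundary formula together with $[X_F,X_G]=X_{\{F,G\}}$ and the contact bracket, and solve the resulting linear system for the coefficients $c$; this yields $\mathrm{Z}^2_{diff}$. Then apply $\delta$ to all degree‑zero differential $1$-cochains $\Upsilon\colon X_G\mapsto\Upsilon(G)\alpha_2^{\lambda}$, using $\delta\Upsilon(X_F,X_G)=(-1)^{|F||\Upsilon|}\mathfrak{L}^{\lambda}_{X_F}\Upsilon(X_G)-(-1)^{|G|(|F|+|\Upsilon|)}\mathfrak{L}^{\lambda}_{X_G}\Upsilon(X_F)-\Upsilon(X_{\{F,G\}})$, to obtain $\mathrm{B}^2_{diff}$, and take the quotient. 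One finds dimensions $3,2,1$ for $\lambda=0,1,3$, with explicit representatives $\Upsilon^0_1,\Upsilon^0_2,\Upsilon^0_3$, then $\Upsilon^1_0,\Upsilon^1_1$, then $\Upsilon^3_0$ as in (\ref{H2013}); a direct check confirms $\delta\Upsilon^j_k=0$, and their independence in cohomology follows by comparing their $\theta$-free (resp. top-$\theta$) terms and their restrictions to $\mathcal{K}(1)_i$.

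\textbf{Main obstacle.} The hard part is Step 3: the super‑sign bookkeeping in the three‑term coboundary formula combined with the contact bracket produces a large system, and the order‑$3$ case $\lambda=3$ involves many monomials. The devices that keep this finite and tractable are the Euler grading and the a priori order bound from Step 1, the prior pinning down of leading terms from Step 2, and — crucially — the preliminary normalisation that makes $\Omega$ vanish on $\mathfrak{osp}(2|2)$, which replaces the full cochain space by the small relative complex and confines the unknown coefficients to a manageable range.
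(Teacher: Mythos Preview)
Your overall architecture—reduce to constant-coefficient cochains via the Euler grading (this is the paper's Lemma~\ref{ot15}), restrict to the $\mathcal{K}(1)_i$, and invoke \cite{bim}—matches the paper. But Step~2 contains a genuine gap that breaks the argument as written.

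First, restriction to $\mathcal{K}(1)_i$ does \emph{not} force $\lambda\in\{0,1,3\}$. By \cite{bim} together with~(\ref{ot1}) one finds $\mathrm{H}^2(\mathcal{K}(1)_i,\mathfrak{F}^2_\lambda)\neq 0$ also at $\lambda=5$ (even part) and at $\lambda\in\{-\tfrac12,\tfrac12,\tfrac32,\tfrac52,\tfrac92\}$ (odd part); see the paper's Proposition~\ref{ot14}. Eliminating these extra values requires the full $\mathcal{K}(2)$-cocycle equation, not just the restriction. Second, and more serious, your inference ``for all other $\lambda$ this already shows $\Omega$ is a coboundary'' is invalid: knowing that $\Omega|_{\mathcal{K}(1)_i}$ is a $\mathcal{K}(1)_i$-coboundary lets you subtract a $\mathcal{K}(2)$-coboundary so that $\Omega|_{\mathcal{K}(1)_i}=0$, but the residual $\Omega$ is then governed by its values on $\mathcal{K}(1)_i\otimes\Pi(\mathcal{H}^i)$ and $\Pi(\mathcal{H}^i)\otimes\Pi(\mathcal{H}^i)$ via the splitting $\mathcal{K}(2)\simeq\mathcal{K}(1)\oplus\Pi(\mathcal{H}^i)$, and nothing you have said forces these to vanish. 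In fact they need not: the paper's Proposition~\ref{ot16} carries out exactly this analysis (your missing step) and shows that nontrivial $\mathcal{K}(2)$-cocycles with \emph{trivial} $\mathcal{K}(1)$-restriction exist, namely $\Upsilon^0_3$ at $\lambda=0$ and $\Upsilon^1_0,\Upsilon^1_1$ at $\lambda=1$. Your Step~2, as stated, never produces these classes and would also fail to rule out analogous ``hidden'' classes at other $\lambda$. The $\mathfrak{osp}(2|2)$-normalisation does not close this gap (and is itself asserted without proof; Whitehead-type vanishing is delicate for Lie superalgebras). What is missing is precisely the content of the paper's Proposition~\ref{ot16}: the separate treatment of cocycles whose $\mathcal{K}(1)$-restriction is a coboundary, via the cocycle relations~(\ref{ot22}) on the mixed pieces.
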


%%%%%%%%%%%%%%%%%%%%%%%%%%%%%%%%%%%%%%%%%%%%%%%%%%%%%%%%%%%%%%%%%%%%%%%%%%%%%
%%%%%%%%%%%%%%%%%%%%%%%%%%%%%%%%%%%%%%%%%%%%%%%%%%%%%%%%%%%%%%%%%%%%%%%%%%%%%%%
\subsection{Relationship between $H^2_{diff}(\mathcal{K}(2),\mathfrak{F}^2_{\lambda})$ and $H^2_{diff}(\mathcal{K}(1),\mathfrak{F}^2_{\lambda})$}
%%%%%%%%%%%%%%%%%%%%%%%%%%%%%%%%%%%%%%%%%%%%%%%%%%%%%%%%%%%%%%%%%%%%%%%%%%%%%%%
%%%%%%%%%%%%%%%%%%%%%%%%%%%%%%%%%%%%%%%%%%%%%%%%%%%%%%%%%%%%%%%%%%%%%%%%%%%%%%%
\vskip0.2cm

According to (\ref{ot1}), we can see that the second cohomology space $H^2_{diff}(\mathcal{K}(1),\mathfrak{F}^2_{\lambda})$ is closely related to the space
$H^2_{diff}(\mathcal{K}(1),\mathfrak{F}^1_{\lambda})$:
\begin{equation}\label{ot19}
H^2_{diff}(\mathcal{K}(1),\mathfrak{F}^2_{\lambda})=H^2_{diff}(\mathcal{K}(1),\mathfrak{F}^1_{\lambda})\oplus
H^2_{diff}(\mathcal{K}(1),\Pi(\mathfrak{F}^1_{\lambda+\frac{1}{2}})).
\end{equation}
Thus, we can easely deduce the following result:\\

\begin{prop}
\label{ot14} 1) The cohomology space
\begin{equation*}
H^2(\mathcal{K}(1)_i,\mathfrak{F}_{\lambda}^2)_0=\left\{
\begin{array}{ll}
\mathbb{K}^3&\makebox{ if }~\lambda=0, \\
\mathbb{K}^2&\makebox{ if }~\lambda=1, \\
\mathbb{K}&\makebox{ if }~\lambda=3,5, \\
0&\makebox { otherwise }.
\end{array}
\right.\end{equation*} The respective nontrivial $2$-cocycles are
\begin{equation}
\label{C012}
\begin{array}{ll}
\bullet\;\displaystyle \Omega_{0,1}^i(X_F,X_F) =\displaystyle
(FG'-F'G)-\Big( \frac{1}{4}+\frac{3}{4}(-1)^{|F||G|} \Big)\overline{\eta}_i(F)\eta_i(G) ,\\
\\[5pt]
\bullet\;\displaystyle \Omega_{0,2}^i(X_F,X_G)
=\displaystyle (-1)^{|F|+|G|}\Big(F'\eta_i(G')-\eta_i(F')G'\Big)\theta_{3-i},\\
\\[5pt]
\bullet\;\displaystyle
\Omega_{0,3}^i(X_F,X_G)=\Big( \frac{1}{2}+\frac{1}{4}(-1)^{|F||G|} \Big)(-1)^{|F|+|G|}\Big(F\eta_i(G')-\eta_i(F')G\Big)\theta_{3-i},\\
\\[5pt]
\bullet\;\displaystyle
\Omega_{1,1}^i(X_F,X_G)=\Big(\overline{\eta}_i(F'')G-(-1)^{|F|}F\overline{\eta}_i(G'') \Big)\theta_{3-i}-
\frac{1}{2}\Big(\eta_i(F)\eta_i(G'')+\eta_i(F'')\eta_i(G)\Big)\theta_i\theta_{3-i}\alpha_2,\\
\\[5pt]
\bullet\;\displaystyle
\Omega_{1,2}^i(X_F,X_G)=\Big(F'\overline{\eta}_i(G'')-\overline{\eta}_i(F'')G'\Big)\theta_{3-i}\alpha_2,\\
\\[5pt]
\bullet\;\displaystyle
\Omega_{3}^i(X_F,X_G)=\eta_i(F'')\overline{\eta}_i(G'')\alpha_2^3 ,\\
\\[5pt]
\bullet\;\displaystyle
\Omega_{5}^i(X_F,X_G)=\Big( (F^{(3)}G^{(4)}-(F^{(4)}G^{(3)})+\frac{3}{2}(\eta_i(F^{(4)})\eta_i(G^{(2)})\\-
(-1)^{|F||G|}\eta_i(F^{(2)})\eta_i(G^{(4)})-4\eta_i(F^{(3)})\eta_i(G^{(3)}))\Big)\alpha_2^5.
\end{array}
\end{equation}
where $X_F,~X_G\in \mathcal{K}_i,~~i=1,~2.$\\
\medskip
2) The cohomology space
\begin{equation*}
\hbox{H}^2(\mathcal{K}(1)_i,\mathfrak{F}_{\lambda}^2)_1=\left\{
\begin{array}{ll}
\mathbb{K}^2&\makebox{ if }~\lambda=\frac{1}{2},\frac{3}{2}\\ \mathbb{K} &\makebox{ if }~\lambda=-\frac{1}{2},\frac{5}{2},\frac{9}{2} \\
0&\makebox { otherwise }.
\end{array}
\right.\end{equation*} The respective nontrivial $2$-cocycles are
\begin{equation}
\label{C0123}
\begin{array}{llll}
\bullet\;\displaystyle \Omega_{-\frac{1}{2}}^i(X_F,X_G) =\displaystyle
\Big[(FG'-F'G)\theta_{3-i}-\Big( \frac{1}{4}+\frac{3}{4}(-1)^{|F||G|} \Big)\overline{\eta}_i(F)\eta_i(G)\theta_{3-i}\Big]\alpha_2^{-\frac{1}{2}}~~
~~~~~\hbox{ if }
\lambda=-\frac{1}{2},\\
\\[2pt]
\bullet\;\left\{
  \begin{array}{ll}
   \displaystyle\Omega_{\frac{1}{2}}^i(X_F,X_G)
=\displaystyle \Big[(-1)^{|F|+|G|}\Big( F'\eta_i(G')-\eta_i(F')G' \Big) \Big]\alpha_2^{\frac{1}{2}},&  \\[5pt]
    \widetilde{\Omega}_{\frac{1}{2}}^i(X_F,X_G)= \Big[ \Big( \frac{1}{2}+\frac{1}{4}\big( 1+(-1)^{|F||G|} \big) \Big) (-1)^{|F|+|G|}
    \Big(F\eta_i(G')-\eta_i(F')G\Big)\Big]\alpha_2^{\frac{1}{2}}, ~~~~ \hbox{ if } \lambda=\frac{1}{2},
  \end{array}
\right.\\
\\[2pt]
\bullet\;\left\{
  \begin{array}{ll}
    \displaystyle\Omega_{\frac{3}{2}}^i(X_F,X_G)
=\displaystyle \Big(\overline{\eta}_i(F'')G-(-1)^{|F|}F\overline{\eta}_i(G'') \Big)-
\frac{1}{2}\Big(\eta_i(F)\eta_i(G'')+\eta_i(F'')\eta_i(G)\Big)\theta_i\alpha_2^{\frac{3}{2}},&  \\[5pt]
    \widetilde{\Omega}_{\frac{3}{2}}^i(X_F,X_G)=\Big(F'\overline{\eta}_i(G'')-\overline{\eta}_i(F'')G'\Big) \alpha_2^{\frac{3}{2}}, ~ \hbox{ if } \lambda=\frac{3}{2},\\
 \end{array}
\right.
\\[2pt]
\bullet\;\displaystyle \Omega_{\frac{5}{2}}^i(X_F,X_G) =\displaystyle
\eta_i(F'')\overline{\eta}_i(G'')\theta_i\alpha_2^{\frac{5}{2}}~~
~~~~~~~~~~~~~~~~~~~~~~~~~~~~~~~~~~~~~~~~~~~~~~~~~~~~~~~~~~~~\hbox{ if }
\lambda=\frac{5}{2},\\
\\[2pt]
\bullet\;\displaystyle \Omega_{\frac{9}{2}}^i(X_F,X_G) =\displaystyle
\Big( (F^{(3)}G^{(4)}-(F^{(4)}G^{(3)})+\frac{3}{2}(\eta_i(F^{(4)})\eta_i(G^{(2)})\\-
(-1)^{|F||G|}\eta_i(F^{(2)})\eta_i(G^{(4)})-4\eta_i(F^{(3)})\eta_i(G^{(3)}))\Big)\theta_i\alpha_2^{\frac{9}{2}}.~~
~~~~~~~~~~~~~~~~~~~~~~~~~~~~~~~~~\hbox{ if }
\lambda=\frac{9}{2},\\
\end{array}
\end{equation}
where $X_F,~X_G\in \mathcal{K}(1)_i,~~i=1,~2.$
\end{prop}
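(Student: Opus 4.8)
\section*{Proof strategy for Proposition \ref{ot14}}

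The plan is to reduce the computation of $H^2(\mathcal{K}(1)_i,\mathfrak{F}_\lambda^2)$ to the already known cohomology of $\mathcal{K}(1)$ with coefficients in $\mathfrak{F}^1_\mu$, using the decomposition of $\mathcal{K}(1)_i$-modules. First I would invoke the isomorphism $\mathcal{K}(1)_i\simeq\mathcal{K}(1)$ together with the splitting (\ref{ot1}), which as a $\mathcal{K}(1)_i$-module reads $\mathfrak{F}_\lambda^2\simeq\mathfrak{F}^1_\lambda\oplus\Pi(\mathfrak{F}^1_{\lambda+\frac12})$. Since $H^2$ is additive in the coefficient module, this yields (\ref{ot19}), so everything comes down to knowing $H^2_{diff}(\mathcal{K}(1),\mathfrak{F}^1_\mu)$ for arbitrary $\mu$; that computation is quoted from \cite{bim}. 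Matching the parameters: the even part $H^2(\mathcal{K}(1)_i,\mathfrak{F}_\lambda^2)_0$ picks up the even cocycles of $H^2(\mathcal{K}(1),\mathfrak{F}^1_\lambda)$ together with the odd cocycles of $H^2(\mathcal{K}(1),\mathfrak{F}^1_{\lambda+\frac12})$ (the parity shift $\Pi$ exchanging the two), and dually for the odd part. Reading off the values of $\mu$ for which $H^2(\mathcal{K}(1),\mathfrak{F}^1_\mu)$ is nonzero — the resonant weights $\mu=0,1,3,5$ in the even sector and $\mu=-\frac12,\frac12,\frac32,\frac52,\frac92$ in the odd sector — and translating back through $\mu=\lambda$ or $\mu=\lambda+\frac12$ gives exactly the case list in parts 1) and 2).

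Next I would produce the explicit cocycles. For each resonant weight I would take the known generator of $H^2(\mathcal{K}(1),\mathfrak{F}^1_\mu)$, written in the single odd variable $\theta_i$, and transport it along the embedding $\mathcal{K}(1)_i\hookrightarrow\mathcal{K}(2)$ and the inclusion $\mathfrak{F}^1_\mu\hookrightarrow\mathfrak{F}_\lambda^2$. Concretely, a density in the $\mathfrak{F}^1_\lambda$ summand is carried as is, while a density in the $\Pi(\mathfrak{F}^1_{\lambda+\frac12})$ summand acquires the factor $\theta_{3-i}$ that accounts for the parity change and the half-integer weight shift $\alpha_2^{\lambda}$ versus $\alpha_2^{\lambda+\frac12}$; this is the origin of every $\theta_{3-i}$ appearing in (\ref{C012})–(\ref{C0123}). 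One then rewrites $\eta_i$-derivatives in terms of the $\overline{\eta}_i$ available on $S^{1|2}$. After writing down each candidate, I would verify directly that $\delta\Omega=0$ using the degree-2 coboundary formula, and check nontriviality by confirming it is not $\delta$ of a differential $1$-cochain of the appropriate weight — the latter being a finite-dimensional linear-algebra check once one bounds the order of the operators involved.

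The main obstacle is not the bookkeeping but pinning down $H^2(\mathcal{K}(1),\mathfrak{F}^1_\mu)$ correctly at the resonant weights and making sure no spurious cohomology is introduced or lost when passing from $\mathcal{K}(1)$ to its copy $\mathcal{K}(1)_i$ inside $\mathcal{K}(2)$: the ambient superspace has an extra odd coordinate $\theta_{3-i}$, so one must confirm that restricting to differential cochains that are polynomial in $\theta_{3-i}$ does not enlarge the cohomology. I expect this to follow from the fact that $\mathcal{K}(1)_i$ acts on the $\theta_{3-i}$-direction only through the $\mathfrak{F}^1_{\lambda+\frac12}$ summand already accounted for, so the $\mathcal{K}(1)_i$-module structure on $\mathfrak{F}_\lambda^2$ is genuinely just the direct sum (\ref{ot1}) with no hidden extension, and additivity of $H^2$ applies cleanly. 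A secondary technical point is the explicit identification of the coefficients $\tfrac14+\tfrac34(-1)^{|F||G|}$ and $\tfrac12+\tfrac14(-1)^{|F||G|}$, which come from normalizing the $\mathcal{K}(1)$-cocycles so that they are genuinely super-skew-symmetric bilinear maps on $\mathcal{K}(1)_i$; I would fix these by evaluating on a spanning set of homogeneous elements.
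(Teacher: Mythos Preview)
Your strategy is exactly the one the paper uses: reduce to the known $H^2(\mathcal{K}(1),\mathfrak{F}^1_\mu)$ from \cite{bim} via the $\mathcal{K}(1)_i$-module splitting $\mathfrak{F}_\lambda^2\simeq\mathfrak{F}^1_\lambda\oplus\Pi(\mathfrak{F}^1_{\lambda+1/2})$, then transport the generating cocycles through the isomorphism (the paper writes this map $\Phi$ explicitly and checks it intertwines the $\mathcal{K}(1)_i$-actions, which is precisely your ``no hidden extension'' point).

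One bookkeeping slip to flag: the list of resonant weights you give for $H^2(\mathcal{K}(1),\mathfrak{F}^1_\mu)$ is not the input list but the \emph{output} list for $\mathfrak{F}_\lambda^2$. According to \cite{bim} (restated in the paper as Proposition~\ref{cohomdensities11}), the even part of $H^2(\mathcal{K}(1),\mathfrak{F}^1_\mu)$ is nonzero only for $\mu\in\{0,3,5\}$ and the odd part only for $\mu\in\{\tfrac12,\tfrac32\}$; there is no cohomology at $\mu=1$ or at $\mu\in\{-\tfrac12,\tfrac52,\tfrac92\}$. The values $\lambda=1$ (even) and $\lambda\in\{-\tfrac12,\tfrac52,\tfrac92\}$ (odd) in the statement arise only after the shift $\mu=\lambda+\tfrac12$ coming from the $\Pi(\mathfrak{F}^1_{\lambda+1/2})$ summand. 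Once you correct this, your parity-matching and the transport of cocycles (with the $\theta_{3-i}$ factor encoding the $\Pi$-summand) go through exactly as you describe and recover (\ref{C012})--(\ref{C0123}).
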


To prove proposition \ref{ot14}, we need the following result (see \cite{bim} ).

\begin{prop}\cite{bim}
\label{cohomdensities11}
The cohomology space
\begin{equation*}
\hbox{H}^2(\mathcal{K}(1)_i,\Im_{\lambda}^{1,i})=\left\{
\begin{array}{ll}
\mathbb{K}^2&\makebox{ if }~\lambda=\frac{1}{2},\;\frac{3}{2}\\ \mathbb{K} &\makebox{ if }~\lambda=0,\;3,\;5 \\
0&\makebox { otherwise }.
\end{array}
\right.\end{equation*} The respective nontrivial $2$-cocycles are
\begin{equation}
\label{C01232}
\begin{array}{llll}
\bullet\;\displaystyle \Omega_0^i(X_F,X_G) =\displaystyle
(FG'-F'G)-\big( \frac{1}{4}+\frac{3}{4}(-1)^{|F||G|} \big)\overline{\eta}_i(F)\eta_i(G) ~~~~~~~~~~~~~~
~~~~~~~~~~~~~~\hbox{ if }
\lambda=0,\\
\\[2pt]
\bullet\;\left\{
  \begin{array}{ll}
 \displaystyle  \Omega_{\frac{1}{2}}^i(X_F,X_G)
=\displaystyle \Big[(-1)^{|F|+|G|}\Big( F'\eta_i(G')-\eta_i(F')G' \Big) \Big]\alpha_{1,i}^{\frac{1}{2}},&  \\[5pt]
    \widetilde{\Omega}_{\frac{1}{2}}^i(X_F,X_G)= \Big[ \Big( \frac{1}{2}+\frac{1}{4}\big( 1+(-1)^{|F||G|} \big) \Big)
    (-1)^{|F|+|G|}\Big(F\eta_i(G')-\eta_i(F')G\Big)\Big]\alpha_{1,i}^{\frac{1}{2}}, ~ \hbox{ if } \lambda=\frac{1}{2},
  \end{array}
\right.
\\[2pt]
\bullet\;\left\{
  \begin{array}{ll}
  \displaystyle  \Omega_{\frac{3}{2}}^i(X_F,X_G)
=\displaystyle \Big(\overline{\eta}_i(F'')G-(-1)^{p(F)}F\overline{\eta}_i(G'')\Big)-\frac{1}{2}\Big(\eta(F)\eta(G'')+\eta(F'')\eta(G)\Big)
\theta_i\alpha^{\frac{3}{2}}_{1,i},&  \\[5pt]
    \widetilde{\Omega}_{\frac{3}{2}}^i(X_F,X_G)=\Big(F'\overline{\eta}_i(G'')-\overline{\eta}_i(F'')G'\Big) \alpha_{1,i}^{\frac{3}{2}}, ~ \hbox{ if } \lambda=\frac{3}{2},\\
  \end{array}
\right.\\
\\[2pt]
\bullet\;\displaystyle\Omega_3(X_F,X_G)=\eta_i(F'')\overline{\eta}_i(G'')\alpha_{1,i}^3~~~~~~~~~~~~~~~~~~~~~~~~~~~~~~~~~~~~~~~~~~~~~~~~~
~~~~~~~~~~~\hbox{ if }\lambda=3\\
\\[2pt]
\bullet\;\displaystyle\Omega_5(X_F,X_G)=\Big( (F^{(3)}G^{(4)}-(F^{(4)}G^{(3)})+\frac{3}{2}(\eta_i(F^{(4)})\eta_i(G^{(2)})\\-
(-1)^{|F||G|}\eta_i(F^{(2)})\eta_i(G^{(4)})-4\eta_i(F^{(3)})\eta_i(G^{(3)}))\Big)\alpha_{1,i}^5.
\end{array}
\end{equation}
where $X_F,~X_G\in \mathcal{K}(1)_i~\hbox{and}~\alpha_{1,i}=dx+\theta_i d\theta_i,~i=1,~2.$
\end{prop}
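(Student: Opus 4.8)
The plan is to pass to cohomology relative to the exceptional subsuperalgebra $\mathfrak{osp}(1|2)\subset\mathcal{K}(1)_i$ and to classify the surviving classes by super-transvectant theory. Recall that $\mathcal{K}(1)_i$ contains the simple subsuperalgebra $\mathfrak{osp}(1|2)=\mathrm{span}\left(X_1,\,X_{\theta_i},\,X_x,\,X_{x\theta_i},\,X_{x^2}\right)$ and that its adjoint module is isomorphic to $\Im_{-1}^{1,i}$; hence a differential $2$-cochain is nothing but a bilinear differential operator $\Im_{-1}^{1,i}\otimes\Im_{-1}^{1,i}\to\Im_{\lambda}^{1,i}$, and the degree-$2$ coboundary formula together with the contact bracket $\{F,G\}$ govern the whole computation.

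First I would reduce to invariant cochains. The Euler field $X_x$ induces a $\bbZ$-grading on $\mathcal{K}(1)_i$ and on the cochain complex, splitting it into finite-dimensional homogeneous pieces on which $\mathfrak{osp}(1|2)$ acts. Since finite-dimensional $\mathfrak{osp}(1|2)$-modules are completely reducible and $\delta$ commutes with the $\mathfrak{osp}(1|2)$-action, an averaging (projection onto invariants) argument shows that every class is represented by an $\mathfrak{osp}(1|2)$-invariant cochain which vanishes whenever one argument lies in $\mathfrak{osp}(1|2)$. This yields the isomorphism $H^2(\mathcal{K}(1)_i,\Im_{\lambda}^{1,i})\cong H^2(\mathcal{K}(1)_i,\mathfrak{osp}(1|2);\Im_{\lambda}^{1,i})$, the super-analogue of the classical reduction of $H^*(\mathrm{Vect}(S^1),\mathcal{F}_\lambda)$ to $\mathfrak{sl}(2)$-relative cohomology.

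Next I would classify the $\mathfrak{osp}(1|2)$-invariant bilinear differential operators $\Im_{-1}^{1,i}\otimes\Im_{-1}^{1,i}\to\Im_{\lambda}^{1,i}$. These are the super-transvectants (the $\mathfrak{osp}(1|2)$ analogues of the Gordan / Rankin--Cohen brackets), built from $'=\partial_x$ and $\overline{\eta}_i$; for each admissible order and each parity they span a finite-dimensional, generically one-dimensional, space, and the order fixes $\lambda$. Writing these transvectants explicitly yields the finite list of candidate cochains for each $\lambda$. I would then impose $\delta\Omega=0$, expanding $\delta\Omega(X_F,X_G,X_H)$ by the degree-$2$ formula and $\{F,G\}$; collecting terms reduces the cocycle equation to a polynomial identity in $\lambda$, whose roots are exactly the resonant weights $\lambda=0,\tfrac12,\tfrac32,3,5$. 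At each resonance one counts the invariant transvectants (of each parity) that solve the equation, and finally subtracts the coboundaries obtained by applying $\delta$ to the $\mathfrak{osp}(1|2)$-invariant $1$-cochains $\Im_{-1}^{1,i}\to\Im_{\lambda}^{1,i}$; this trimming leaves the stated dimensions $\mathbb{K}^2$ at $\lambda=\tfrac12,\tfrac32$ and $\mathbb{K}$ at $\lambda=0,3,5$, represented by $\Omega_0^i$, $\Omega_{1/2}^i,\widetilde\Omega_{1/2}^i$, $\Omega_{3/2}^i,\widetilde\Omega_{3/2}^i$, $\Omega_3$ and $\Omega_5$.

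The main obstacle is the explicit super-calculus in the cocycle step. Because the contact bracket contains the quadratic term $-\tfrac12(-1)^{|F|}\overline{\eta}_i(F)\overline{\eta}_i(G)$, the operator $\delta$ is far from a simple derivation, so expanding $\delta\Omega$ on a candidate transvectant generates many sign-sensitive terms that must be organized by parity and by their dependence on $x$ and $\theta_i$. Keeping the $\bbZ_2$-graded bookkeeping consistent — so that the even and odd parts of $H^2$ are computed correctly and the half-integer weights $\lambda=\tfrac12,\tfrac32$ are not confused with the integer ones — is where essentially all the difficulty lies; once the polynomial-in-$\lambda$ obstruction has been extracted correctly, matching the surviving transvectants to the listed cocycles is routine verification.
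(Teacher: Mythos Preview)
The paper does not prove this proposition at all: it is quoted verbatim from \cite{bim} (note the citation attached to the proposition header), and the text immediately moves on to the proof of Proposition~\ref{ot14}, which \emph{uses} this result as input. So there is no ``paper's own proof'' to compare against.

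That said, your strategy is the standard one for this class of results and is essentially what is carried out in \cite{bim} and its predecessors (e.g.\ \cite{BN}, \cite{imnm}): reduce to $\mathfrak{osp}(1|2)$-relative cohomology via complete reducibility, classify the invariant bilinear differential operators $\Im_{-1}^{1,i}\otimes\Im_{-1}^{1,i}\to\Im_{\lambda}^{1,i}$ by super-transvectant theory, impose the cocycle condition to isolate the resonant weights, and quotient by invariant coboundaries. The outline is correct and would reproduce the stated list. The only caveat is that your sketch stays at the level of a plan: the actual work --- writing down the transvectants degree by degree, tracking the $\bbZ_2$-graded signs through $\delta\Omega$, and verifying non-triviality at each resonant $\lambda$ --- is precisely the content of \cite{bim}, and you have not exhibited any of it. As a proof \emph{proposal} this is fine; as a proof it would need those computations filled in.
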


\noindent{\it Proof of Proposition \ref{ot14}}: Let
$F\alpha_2^{\lambda}=(f_0+f_1\theta_1+
f_2\theta_2+f_{12}\theta_1\theta_2)\alpha_2^{\lambda}
\in\fF_{\lambda}^2$. The map
\begin{equation*}
\begin{array}{lcll} \Phi:&\fF_{\lambda}^2 &\longrightarrow&
\Im_{\lambda}^{1,i}\oplus \Pi(\Im_{\lambda+\frac{1}{2}}^{1,i})\\
&F\alpha_2^{\lambda}&\longmapsto&\Big((1-\theta_{3-i}\partial_{{\theta}_{3-i}})(F)
\alpha_{1,i}^{\lambda},~
\Pi\big((-1)^{|F|+1}\partial_{\theta_{3-i}}(F)\alpha_{1,i}^{\lambda+\frac{1}{2}}\big)\Big),
\end{array}
\end{equation*}
where $i=1, 2$ and $\Pi$ stands for the parity change map,
 provides us with an isomorphism of $\mathcal{K}(1)_i$-modules. In fact, we easily check  that
$$\mathfrak{L}_{X_H}^\lambda(F)\alpha_2^\lambda=
\Big(\mathfrak{L}_{X_H}^\lambda \big((1-\theta_{3-i}\partial_{{\theta}_{3-i}})(F)\big)+
\mathfrak{L}_{X_H}^{\lambda+\frac{1}{2}}\big((-1)^{|F|+1}\partial_{\theta_{3-i}}(F)\big)\theta_{3-i}\Big)
\alpha_2^\lambda.$$

 This map induces the
following isomorphism between cohomology spaces:
\begin{equation*}
\hbox{H}^2(\mathcal{K}(1)_i,~\fF_{\lambda}^2) \cong \hbox{H}^2(\mathcal{K}(1)_i,~\Im^{1,i}_{\lambda})
\oplus \hbox{H}^2(\mathcal{K}(1)_i,~\Pi(\Im^{1,i}_{\lambda+\frac{1}{2}})).
\end{equation*}
We deduce from this isomorphism and Proposition \ref{cohomdensities11}, the
$2$-cocycles (\ref{C012}--\ref{C0123}). \hfill $\Box$\\
\vskip 0.2 cm
Recall that the adjoint $\mathcal{K}(n)$-module, is isomorphic to $\mathfrak{F}^n_{-1}.$ Thus, the $\mathcal{K}(1)_i$-isomorphism (\ref{ot1}) yields the following $\mathcal{K}(1)_i$-isomorphism:
\begin{equation}\label{ot20}
\mathcal{K}(2)\simeq\mathcal{K}(1)\oplus\Pi(\mathcal{H}^i),
\end{equation}
where $\mathcal{H}^i$ is isomorphic to $\mathfrak{F}^{1,i}_{-\frac{1}{2}}$. More precisely, any element $X_F\in\mathcal{K}(2)$ is decomposed into
$X_F=X_{F_1}+X_{F_2\theta_{3-i}}$, where $\partial_{\theta_{3-i}}F_1=\partial_{\theta_{3-i}}F_2=0$, and then $X_{F_1}\in\mathcal{K}(1)$ and $X_{F_2\theta_2}$ is identified to
$\Pi(F_2\alpha_{1,i}^{-\frac{1}{2}})\in\Pi(\mathcal{H}^i)$ and it will be denoted $X_{\overline{F}_2}$. Moreover, we can easily that:
\vskip 0.2 cm
\begin{equation}\label{ot21}
[\mathcal{K}(1),\Pi(\mathcal{H}^i)]\subset\Pi(\mathcal{H}^i)\; and\; [\Pi(\mathcal{H}^i),\Pi(\mathcal{H}^i)]\subset\mathcal{K}(1)
\end{equation}

The following lemma gives the general form of each 2-cocycle.\\
\vskip 0.2 cm
\begin{lem}\label{ot15}
Let $\Omega\in Z^2_{diff}(\mathcal{K}(2),\mathfrak{F}^2_{\lambda})$. Up to a coboundary, the map $\Omega$ are given by
$$\Omega(X_F,X_G)=\displaystyle\sum_{i,j,k,l\geq0}a_{ij}^{kl}\overline{\eta}_1^i\overline{\eta}_2^j(F)
\overline{\eta}_1^k\overline{\eta}_2^l(G)\alpha^{\lambda},\;where\;\frac{\partial}{\partial_x}a_{ij}^{kl}=0.$$

\end{lem}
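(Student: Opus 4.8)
The plan is to reduce a general differential 2-cocycle to the normal form asserted, using two successive applications of the action by the projective subalgebra and the invariance of the coboundary operator. First I would observe that any differential operator $\Omega:\wedge^2\mathcal{K}(2)\to\mathfrak{F}^2_\lambda$ can be written, using that $\partial_x,\overline{\eta}_1,\overline{\eta}_2$ generate all derivations on $S^{1|2}$ together with multiplication by coordinates, in the form $\Omega(X_F,X_G)=\sum b^{kl,\beta}_{ij,\alpha}(x,\theta)\,\partial_x^\alpha\overline{\eta}_1^i\overline{\eta}_2^j(F)\,\partial_x^\beta\overline{\eta}_1^k\overline{\eta}_2^l(G)\,\alpha_2^\lambda$ (since $\overline{\eta}_i^2=-\partial_x$, the exponents $i,j,k,l$ may be taken in $\{0,1\}$ after absorbing squares into $x$-derivatives). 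So the first reduction is: \emph{no $x$-derivatives are needed}, i.e.\ one can take $\alpha=\beta=0$. This follows from the $\mathfrak{sl}(2)$-equivariance: restricting to the subalgebra generated by $X_1,X_x,X_{x^2}$ (an $\mathfrak{sl}(2)\subset\mathcal{K}(2)$), a cocycle is in particular an $\mathfrak{sl}(2)$-relative cochain up to coboundary, and the standard argument (as in Fuchs, \cite{Fuchs}, and its super-analogue in \cite{bim}) lets us kill all terms carrying $x$-derivatives on the arguments by subtracting $\delta$ of a suitable $1$-cochain; here is where one uses that the relevant $\mathfrak{sl}(2)$-modules of densities have no invariants forcing such terms to survive.

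Next I would use the $\mathfrak{osp}(1|2)$- (or at least the supersymmetry-) part of the action to pin down the coefficients $b^{kl}_{ij}(x,\theta)$. The cocycle condition $\delta\Omega=0$ applied with one of the three arguments taken to be $X_1$ (the generator of translations in $x$, which acts on $\mathfrak{F}^2_\lambda$ by $\partial_x$) gives $\partial_x\big(\Omega(X_F,X_G)\big)=\Omega(X_{F'},X_G)+(-1)^{|F|}\Omega(X_F,X_{G'})$ up to the parity signs in the formula for $\delta$; plugging in the differential-operator ansatz and matching, this forces each coefficient $b^{kl}_{ij}$ to be independent of $x$, exactly as in the classical case. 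Then applying $\delta\Omega=0$ with an argument equal to the odd generators $X_{\theta_1},X_{\theta_2}$ (whose brackets produce the $\overline{\eta}_i$'s and whose action involves $\theta_i\partial_x$ and $\partial_{\theta_i}$) constrains the $\theta$-dependence: combined with homogeneity under the Euler-type element $X_x$ (which measures total degree) one deduces that the only surviving coefficients are constants, i.e.\ $b^{kl}_{ij}\in\mathbb{K}$. Putting this together yields precisely $\Omega(X_F,X_G)=\sum_{i,j,k,l\ge0}a^{kl}_{ij}\,\overline{\eta}_1^i\overline{\eta}_2^j(F)\,\overline{\eta}_1^k\overline{\eta}_2^l(G)\,\alpha_2^\lambda$ with $\partial_x a^{kl}_{ij}=0$ — and in fact constant, which is what the statement means (the indices are bounded since $\overline{\eta}_i^2=-\partial_x$ and $x$-derivatives were already removed).

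The main obstacle will be the first reduction — showing that all $x$-derivatives on the arguments can be removed up to a coboundary. In the non-super case this is a clean $\mathfrak{sl}(2)$-cohomology computation, but here one must work $\mathfrak{osp}(1|2)$-relatively (or iterate the $\mathfrak{sl}(2)$ argument and then handle the odd directions separately), keep careful track of the parity signs appearing in the super coboundary formulas for $\delta_1$ and $\delta_2$ quoted above, and verify that the $1$-cochain one subtracts is itself differential and $\mathcal{K}(2)$-admissible. A secondary nuisance is bookkeeping: rewriting an arbitrary differential operator in the $\overline{\eta}_i$-basis requires the commutation relations $[\overline{\eta}_i,\overline{\eta}_j]_+=-2\delta_{ij}\partial_x$ and $[\partial_x,\overline{\eta}_i]=0$, and one must be consistent about ordering conventions so that the exponents $i,j,k,l$ are genuinely in $\{0,1\}$. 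Once these are in place, the statement follows; the heavy identification of \emph{which} constant tuples $(a^{kl}_{ij})$ actually give cocycles (and which are coboundaries) is deferred to the subsequent analysis culminating in Theorem~\ref{ot12}.
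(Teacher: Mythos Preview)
Your proposal misreads the lemma in two ways, and the resulting strategy is both more complicated than needed and overshoots the actual claim.

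First, the ``main obstacle'' you isolate --- removing $x$-derivatives on $F$ and $G$ by subtracting a coboundary --- is a non-problem. The indices in the statement are $i,j,k,l\ge 0$, not $i,j,k,l\in\{0,1\}$. Since $\overline{\eta}_i^2=-\partial_x$, every term of the form $\partial_x^\alpha\overline{\eta}_1^{i}\overline{\eta}_2^{j}(F)\,\partial_x^\beta\overline{\eta}_1^{k}\overline{\eta}_2^{l}(G)$ is already a monomial $\overline{\eta}_1^{i'}\overline{\eta}_2^{j'}(F)\,\overline{\eta}_1^{k'}\overline{\eta}_2^{l'}(G)$ for suitable $i',j',k',l'\ge 0$. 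So the representation $\Omega(X_F,X_G)=\sum a_{ij}^{kl}(x,\theta)\,\overline{\eta}_1^i\overline{\eta}_2^j(F)\,\overline{\eta}_1^k\overline{\eta}_2^l(G)\,\alpha_2^\lambda$ with \emph{function} coefficients holds for any bilinear differential operator, with no coboundary adjustment at all. The paper's proof simply starts from this observation.

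Second, and more seriously, your conclusion that the coefficients are constants in $\mathbb{K}$ is false and contradicts the explicit cocycles exhibited later in the paper. Look at $\Upsilon^0_1$ or $\Upsilon^0_2$ in (\ref{H2013}): they contain terms with coefficients $\theta_1$, $\theta_2$, $\theta_1\theta_2$. The lemma only asserts $\partial_x a_{ij}^{kl}=0$; the $a_{ij}^{kl}$ may (and do) depend on $\theta_1,\theta_2$ and on the parities $|F|,|G|$. Your step invoking $X_{\theta_1},X_{\theta_2}$ and homogeneity under $X_x$ to force the $\theta$-dependence away would, if it worked, prove something stronger than the lemma and incompatible with the subsequent results.

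The paper's actual argument is much shorter: write $\Omega$ in the $\overline{\eta}$-basis as above (automatic), then impose the cocycle identity --- essentially your observation with the translation field $X_1$ --- to obtain $\partial_x a_{ij}^{kl}=0$. The skew-symmetry of $\Omega$ is then used only to record the symmetry relation $a_{ij}^{kl}(F,G)=(-1)^{\varepsilon}a_{kl}^{ij}(G,F)$, not to eliminate $\theta$-dependence. No $\mathfrak{sl}(2)$- or $\mathfrak{osp}(1|2)$-relative cohomology is invoked at this stage.
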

Proof. Evry differential operator $\Omega$ can be expressed in the form \\ $\Omega(X_F,X_G)=\displaystyle\sum_{i,j,k,l\geq0}a_{ij}^{kl}\overline{\eta}_1^i\overline{\eta}_2^j(F)
\overline{\eta}_1^k\overline{\eta}_2^l(G)\alpha^{\lambda},\;where\;\frac{\partial}{\partial_x}a_{ij}^{kl}=0$, where the coefficients $a_{ij}^{kl}$
are arbitrary functions. Using the 2-cocycle equation, we show that $\frac{\partial}{\partial_x}a_{ij}^{kl}=0$. That is, the coefficients
$a_{ij}^{kl}$ are not depending on the variable $x$, but they are depending on $\theta_i,\;i=1,2$ and on the parity of $F$ and $G$. The dependence on the parity of $F$ and $G$ becomes from the fact that $\Omega$ is skew-symmetric:
$$a_{ij}^{kl}(F,G)=(-1)^{\varepsilon_{ij}^{kl}(F,G)}a_{kl}^{ij}(F,G)\;where\;\varepsilon_{ij}^{kl}(F,G)=ijkl(p(F)+1)(p(G)+1))+p(F)p(G)+1,$$
indeed, $\overline{\eta}_i$ is an odd operator.
\vskip 0.2 cm The main result in this subsection is the following:
\begin{prop}\label{ot16}
There exist, up to a scalar factor and a coboundary, only three non trivial 2-cocycles $\Omega_0,\;\Omega_1$ and $\widetilde{\Omega}_1$
from $\mathcal{K}(2)$ to $\mathfrak{F}^2_{\lambda}$, given by

\begin{equation}\label{ot18}
\begin{array}{ll}
\displaystyle \Omega_0(X_F,X_G) =\displaystyle\Big(F'\overline{\eta}_1\overline{\eta}_2(G)-\overline{\eta}_1\overline{\eta}_2(F)G'\Big)\\
 &\hbox{ if } \lambda=0,\\
\\[5pt] \displaystyle \Omega_1(X_F,X_G)
=\displaystyle \Big((-1)^{|F|}F'\overline{\eta}_1\overline{\eta}_2(G')-(-1)^{|G|}\overline{\eta}_1\overline{\eta}_2(F')G'\Big)\alpha_2\\
\\[5pt]
\displaystyle \widetilde{\Omega}_1(X_F,X_G)=\displaystyle (-1)^{(|F|+|G|)}\Big(\overline{\eta}_1\overline{\eta}_2(F)\overline{\eta}_1\overline{\eta}_2(G')
-\overline{\eta}_1\overline{\eta}_2(F')\overline{\eta}_1\overline{\eta}_2(G)\Big)\alpha_2,\\
&\hbox{ if } \lambda=1.\\
\end{array}
\end{equation}
such that any nonzero linear combination is a nontrivial 2-cocycle and their restrictions to $\mathcal{K}(1)_1$ or $\mathcal{K}(1)_2$ are coboundaries.\\

\end{prop}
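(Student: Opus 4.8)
The plan is to identify the space in the statement with the kernel of the restriction map $\mathrm{res}\colon H^2_{diff}(\mathcal{K}(2),\mathfrak{F}^2_\lambda)\to H^2_{diff}(\mathcal{K}(1)_1,\mathfrak{F}^2_\lambda)\oplus H^2_{diff}(\mathcal{K}(1)_2,\mathfrak{F}^2_\lambda)$ and to compute that kernel via the $\mathbb{Z}_2$-splitting $\mathcal{K}(2)=\mathcal{K}(1)_i\oplus\Pi(\mathcal{H}^i)$ of (\ref{ot20}) together with the bracket relations (\ref{ot21}). Fix $i$; by the $\theta_1\leftrightarrow\theta_2$ symmetry of the pair $(\mathcal{K}(2),\mathfrak{F}^2_\lambda)$ it is enough to treat the restriction to a single $\mathcal{K}(1)_i$. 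By Lemma \ref{ot15} I may take $\Omega$ in the normal form quoted there, with $x$-independent coefficients; relative to (\ref{ot20}) and to the companion module decomposition $\mathfrak{F}^2_\lambda\simeq\Im^{1,i}_\lambda\oplus\Pi(\Im^{1,i}_{\lambda+\frac12})$ of (\ref{ot1}), this writes $\Omega=\Omega^{(0)}+\Omega^{(1)}+\Omega^{(2)}$, where $\Omega^{(r)}$ is the component having exactly $r$ of its two arguments in $\Pi(\mathcal{H}^i)$.

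The first step is to remove $\Omega^{(0)}$. Since $\Omega^{(0)}$ is just $\Omega|_{\mathcal{K}(1)_i}$, which is a coboundary by hypothesis, I subtract a suitable $1$-cochain of $\mathcal{K}(2)$ and assume $\Omega^{(0)}=0$. With $\Omega^{(0)}=0$ the remaining instances of the cocycle identity $\delta\Omega(X,Y,Z)=0$, sorted according to how the arguments distribute over the two summands in (\ref{ot20}) and using (\ref{ot21}), reduce to: (a) for $X\in\Pi(\mathcal{H}^i)$ and $Y,Z\in\mathcal{K}(1)_i$, that $X\mapsto\Omega^{(1)}(\,\cdot\,,X)$ is a $1$-cocycle of $\mathcal{K}(1)_i$ with values in $\Hom(\Pi(\mathcal{H}^i),\mathfrak{F}^2_\lambda)$; (b) for $X,Y\in\Pi(\mathcal{H}^i)$ and $Z\in\mathcal{K}(1)_i$, an identity measuring the failure of $\Omega^{(2)}$ to be $\mathcal{K}(1)_i$-invariant in terms of $\Omega^{(1)}$; and (c) for $X,Y,Z\in\Pi(\mathcal{H}^i)$, using $[\Pi(\mathcal{H}^i),\Pi(\mathcal{H}^i)]\subset\mathcal{K}(1)_i$ and $\Omega^{(0)}=0$, a linear relation tying $\Omega^{(2)}$ to $\Omega^{(1)}$ evaluated on brackets.

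The second step, which is the heart of the matter, is to solve this coupled system. Substituting $\mathcal{H}^i\simeq\Im^{1,i}_{-\frac12}$ and $\mathfrak{F}^2_\lambda\simeq\Im^{1,i}_\lambda\oplus\Pi(\Im^{1,i}_{\lambda+\frac12})$, conditions (a)--(c) become equations in the first cohomology of $\mathcal{K}(1)_i$ with coefficients in $\Hom$-modules built from $S^{1|1}$-densities and in the space of $\mathcal{K}(1)_i$-equivariant bilinear differential operators on such densities; both are fully known (Proposition \ref{cohomdensities11}, and the classical classification of equivariant operators), so the system can be solved. Concretely --- and this is also how one may run the computation directly from the normal form of Lemma \ref{ot15} --- everything comes down to a finite linear system for the constants $a_{ij}^{kl}$, whose solution space is nonzero only when $\lambda=0$, where it is one-dimensional, and when $\lambda=1$, where it is two-dimensional; for all other $\lambda$ one obtains $\Omega^{(1)}=\Omega^{(2)}=0$, hence $\Omega$ is a coboundary. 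Writing the surviving solutions in $\overline{\eta}_1,\overline{\eta}_2$ form yields exactly $\Omega_0$ (for $\lambda=0$) and $\Omega_1,\widetilde{\Omega}_1$ (for $\lambda=1$) of (\ref{ot18}).

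It then remains to deal with the three explicit maps: to verify they are $2$-cocycles (a direct check using (\ref{crochet}) and the super-Leibniz rule for $\overline{\eta}_1\overline{\eta}_2$), to verify that each restricts to a coboundary on $\mathcal{K}(1)_1$ and on $\mathcal{K}(1)_2$ (which one reads off after transporting through the isomorphism $\Phi$ of Proposition \ref{ot14} and comparing with Proposition \ref{cohomdensities11}), and to confirm that no nonzero combination is a coboundary of $\mathcal{K}(2)$ --- automatic from the dimension count obtained above. The main obstacle is the second step: keeping the Koszul signs straight when splitting $\delta\Omega=0$ along the extra $\mathbb{Z}_2$-grading, and pinning down that the $1$-cocycle condition (a) is compatible with the invariance defect (b)--(c) precisely at $\lambda=0$ and $\lambda=1$. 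The first and last steps are comparatively routine.
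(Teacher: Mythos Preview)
Your approach is essentially the same as the paper's: both normalize so that $\Omega|_{\mathcal{K}(1)_1}=0$, then split the $2$-cocycle identity according to how the three arguments distribute over $\mathcal{K}(1)_1\oplus\Pi(\mathcal{H}^1)$ --- the paper's three displayed relations (\ref{ot22}) are precisely your conditions (a), (b), (c) --- and solve the resulting finite linear system in the coefficients of Lemma~\ref{ot15}.

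One point to tighten: your claim that the solution space is one-dimensional for $\lambda=0$ and that nontriviality is then ``automatic from the dimension count'' is not quite what the computation gives. In the paper the $\lambda=0$ solve produces a \emph{two}-dimensional space of cocycles with $\Omega^{(0)}=0$, namely $\alpha\,\Omega_0+\beta\,\delta\bigl(-2\overline{\eta}_1\overline{\eta}_2(F')\theta_1\theta_2\bigr)$, one direction of which is visibly a coboundary. Nontriviality of $\Omega_0$ (and, for $\lambda=1$, the independence of $\Omega_1$ and $\widetilde{\Omega}_1$) is then established not by a dimension count but by exhibiting an explicit monomial --- e.g.\ $f_{12}g_0'-f_0'g_{12}$ --- that appears in the cocycle but cannot appear in any $\delta b$ with $b$ of the form (\ref{ot23}). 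Your residual coboundary freedom after setting $\Omega^{(0)}=0$ (coming from $1$-cochains whose restriction to $\mathcal{K}(1)_1$ is a $1$-cocycle) is exactly what accounts for this extra direction, so either track it explicitly or replace the dimension-count argument by the paper's term-by-term check.
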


\begin{proof}
 Let $\Upsilon$ a 2-cocycle of $\mathcal{K}(2)$ vanishing on $\mathcal{K}(1)_i,\;i=1,2$ (for example $\mathcal{K}(1)_1$).\\
Assume that $\Upsilon/\mathcal{K}(1)_1$ is a coboundary, that is, there exist $\Phi\in\mathfrak{F}^2_{\lambda}$ such that
$$\Upsilon(X_{F_1},X_{G_1})=\delta(\Phi)(X_{F_1},X_{G_1})\;forall\;X_{F_1},\;X_{G_1}\in\mathcal{K}(1)_1.$$
By replacing $\Upsilon$ by $\Upsilon-\delta(\Phi)$, we can suppose that $\Upsilon$ vanishes on $\mathcal{K}(1)_1.$ But in this case according to
(\ref{ot21}), the 2-cocycle relations read
\begin{equation}\label{ot22}
\begin{array}{ll}
\bullet\; \displaystyle(-1)^{|F_1||\Upsilon|}\mathfrak{L}^{\lambda}_{X_{F_1}}(\Upsilon(X_{G_1},X_{H\theta_2}))-
(-1)^{|G_1|(|F_1|+|\Upsilon|)}\mathfrak{L}^{\lambda}_{X_{G_1}}(\Upsilon(X_{F_1},X_{H\theta_2}))-\Upsilon([X_{F_1},X_{G_1}],X_{H\theta_2})\\+
(-1)^{|G_1|(|H|+1)}\Upsilon([X_{F_1},X_{H\theta_2}],X_{G_1})-(-1)^{|F_1|(|G_1|+|H|+1)}\Upsilon([X_{G_1},X_{H\theta_2}],X_{F_1})=0\\
\\[5pt]
\bullet\;\displaystyle(-1)^{|F_1||\Upsilon|}\mathfrak{L}^{\lambda}_{X_{F_1}}(\Upsilon(X_{H_1\theta_2},X_{H_2\theta_2}))-
(-1)^{(|H_1|+1)(|F_1|+|\Upsilon|)}\mathfrak{L}^{\lambda}_{X_{H_1\theta_2}}(\Upsilon(X_{F_1},X_{H_2\theta_2}))
\\+(-1)^{(|H_2|+1)(|F_1|+|H_1|+|\Upsilon|+1)}\mathfrak{L}^{\lambda}_{X_{H_2\theta_2}}(\Upsilon(X_{F_1},X_{H_1\theta_2}))-
\Upsilon([X_{F_1},X_{H_1\theta_2}],X_{H_2\theta_2})\\+
(-1)^{(|H_1|+1)(|H_2|+1)}\Upsilon([X_{F_1},X_{H_2\theta_2}],X_{H_1\theta2})=0\\
\\[5pt]
\bullet\;\displaystyle(-1)^{(|H_1|+1)|\Upsilon|}\mathfrak{L}^{\lambda}_{X_{H_1\theta_2}}(\Upsilon(X_{H_2\theta_2},X_{H_3\theta_2}))-
(-1)^{(|H_2|+1)(|H_1|+|\Upsilon|+1)}\mathfrak{L}^{\lambda}_{X_{H_2\theta_2}}(\Upsilon(X_{H_1\theta_1},X_{H_3\theta_2}))
\\+(-1)^{(|H_3|+1)(|H_1|+|H_2|+|\Upsilon|+1)}\mathfrak{L}^{\lambda}_{X_{H_3\theta_2}}(\Upsilon(X_{H_1\theta_2},X_{H_2\theta_2}))-
\Upsilon([X_{H_1\theta_2},X_{H_2\theta_2}],X_{H_3\theta_2})\\+
(-1)^{(|H_2|+1)(|H_3|+1)}\Upsilon([X_{H_1\theta_2},X_{H_3\theta_2}],X_{H_2\theta2})-
(-1)^{(|H_1|+1)(|H_2|+|H_3|)}\Upsilon([X_{H_2\theta_2},X_{H_3\theta_2}],X_{H_1\theta2})=0,
\end{array}
\end{equation}
\vskip 0.2cm
where $X_{F_1},X_{G_1}\in\mathcal{K}(1)_1$ and $H,\;H_1,\;H_2,\;H_3\in\mathcal{H}^1$. According to (\ref{ot22}) and lemma (\ref{ot15}), we deduce the expression of $\Upsilon$. To be more precise, we get
$$\Upsilon(X_F,X_G)=\left\{
\begin{array}{lll}
\alpha\Big(F'\overline{\eta}_1\overline{\eta}_2(G)-\overline{\eta}_1\overline{\eta}_2(F)G'\Big)+
\beta\delta(-2\overline{\eta}_1\overline{\eta}_2(F'))\theta_1\theta_2 &\makebox{ if }~\lambda=0, \\[8pt]
\Big((-1)^{|F|}F'\overline{\eta}_1\overline{\eta}_2(G')-(-1)^{|G|}\overline{\eta}_1\overline{\eta}_2(F')G'\Big)\alpha_2
\\[8pt]
 (-1)^{(|F|+|G|)}\Big(\overline{\eta}_1\overline{\eta}_2(F)\overline{\eta}_1\overline{\eta}_2(G')
-\overline{\eta}_1\overline{\eta}_2(F')\overline{\eta}_1\overline{\eta}_2(G)\Big)\alpha_2&\makebox{ if }~\lambda=1, \\[8pt]
0 &\makebox{ if }~\lambda\neq0,1.
\end{array}
\right.$$

$\bullet$ For $\lambda=0.$\\
\vskip0.2cm
The 2-cocycle $\Omega_0(X_F,X_G)=\Big(F'\overline{\eta}_1\overline{\eta}_2(G)-\overline{\eta}_1\overline{\eta}_2(F)G'\Big)$ is nontrivial. Indeed, suppose that there exist a map $b$ from $\mathcal{K}(2)$ into $\mathfrak{F}^2_{\lambda}$ having the general form
\begin{equation}\label{ot23}
b(X_F)=\displaystyle\sum_{i,j\geq0}a_{ij(x,\theta)}\overline{\eta}^i_1\overline{\eta}^j_2(F)\alpha^2_{\lambda},
\end{equation}

where $a_{ij}\in\mathcal{C}^{\infty}(S^{1|2})$, such that $\Omega_0=\delta(b)$. By a direct computation, we find that the term $(f_{12}g_0'-f_0'g_{12})$ exist in the expression of $\Omega_0$ but not in the $\delta(b)$, wich gives $\Omega_0$ is nontrivial 2-cocycle.\\
\vskip0.2cm
$\bullet$ For $\lambda=1$.\\
\vskip0.2cm
For the same reason that for $\lambda=0$, we show that $\Omega_1$ and $\widetilde{\Omega}_1$ are nontrivial 2-cocycles. On the other hand $\Omega_1$ and $\widetilde{\Omega}_1$ are not cohomologous, otherwise there exist a map $b$ in the general form given by (\ref{ot23}) such that, $\alpha\Omega_1+\beta\widetilde{\Omega}_1=\delta(b)$, where $(\alpha,\;\beta)\in\mathbb{R}^2\backslash\{(0,0)\}$, but the term $\alpha(f_{12}g_0'-f_0'g_{12})+\beta(f_{12}'g_{12}-f_{12}g_{12}')$ exist in the expression of $\alpha\Omega_1+\beta\widetilde{\Omega}_1$ but not in the $\delta(b)$, which gives that $\alpha(f_{12}g_0'-f_0'g_{12})+\beta(f_{12}'g_{12}-f_{12}g_{12}')=0$, i.e.f $\alpha=\beta=0$, and then $\Omega_1$ and $\widetilde{\Omega}_1$ are not cohomologous.
\end{proof}
%%%%%%%%%%%%%%%%%%%%%%%%%%%%%%%%%%%%%%%%%%%%%%%%%%%%%%%%%%%%%%%%%%%%%%%%%%%%
\subsection{Proof of Theorem \ref{ot12} }
%%%%%%%%%%%%%%%%%%%%%%%%%%%%%%%%%%%%%%%%%%%%%%%%%%%%%%%%%%%%%%%%%%%%%%%%%%%%%%%%%
\vskip 0.2 cm
\begin{proof}
 Consider a 2-cocycle $\Upsilon\in Z^2_{diff}(\mathcal{K}(2),\mathfrak{F}^2_{\lambda})$. If $\Upsilon/_{\mathcal{K}(1)^{\otimes2}}$ is trivial
then the 2-cocycle $\Upsilon$ is completely described by proposition \ref{ot16}. Thus, assume that  $\Upsilon/_{\mathcal{K}(1)^{\otimes2}}$ is non trivial.
Of course, by considering proposition \ref{ot14}, we deduce that nontrivial spaces $H^2_{diff}(\mathcal{K}(2),\mathfrak{F}^2_{\lambda})$ only can
appear if $\lambda\in\{-\frac{1}{2},0,\frac{1}{2},1,\frac{3}{2},2,\frac{5}{2},3,\frac{9}{2},5\}.$\\
\vskip 0.2 cm The $\mathcal{K}(1)_i$-isomorphism:
$$H^2_{diff}(\mathcal{K}(1)_i,\mathfrak{F}^2_{\lambda})\simeq H^2_{diff}(\mathcal{K}(1)_i,\mathfrak{F}^{1,i}_{\lambda})\oplus
H^2_{diff}(\mathcal{K}(1)_i,\Pi(\mathfrak{F}^{1,i}_{\lambda+\frac{1}{2}})).$$
Together with proposition \ref{ot14} describe, up to a coboundary and up to a scalar factor, the restriction of any 2-cocycle $\Upsilon$
to $\mathcal{K}(1)_1$ and to $\mathcal{K}(1)_2$. As before, we consider separately the even and the odd cases. Even cohomology spaces only can appear if
$\lambda\in\{0,1,2,3,5\}$ and odd cohomology spaces only can appear if $\lambda\in\{-\frac{1}{2},\frac{1}{2},\frac{3}{2},\frac{5}{2},\frac{9}{2}\}$.
In each case, after having the restriction of $\Upsilon$ to both $\mathcal{K}(1)_1$ and $\mathcal{K}(1)_2$, we complete the expression obtained by the corresponding other terms having the same parity of the last expression then we apply the 2-cocycles conditions we get:
$$\Upsilon(X_F,X_G)=\left\{
\begin{array}{lll}
\alpha\Upsilon^0_1(X_F,X_G)+\beta\Upsilon^0_2(X_F,X_G) &\makebox{ if }~\lambda=0, \\[8pt]
\Upsilon^3_0(X_F,X_G)&\makebox{ if }~\lambda=3, \\[8pt]
0 &\makebox{ if }~\lambda\neq0,3,
\end{array}
\right.$$
where $\Upsilon^0_1,\;\Upsilon^0_2$ and $\Upsilon^3_0$ are given by (\ref{H2013}).\\ Now, assume that there exist a map $b$ in the general form given by (\ref{ot23}) such that $\alpha\Upsilon^0_1(X_F,X_G)+\beta\Upsilon^0_2(X_F,X_G)=\delta(b)(X_F,X_G).$\\
The term $(f_{12}g_0-f_0g_{12})$ exist in the expression of $\alpha\Upsilon^0_1+\beta\Upsilon^0_2$ but not in the $\delta(b)$, wich gives that $\alpha\Upsilon^0_1+\beta\Upsilon^0_2\neq\delta(b),\;\forall(\alpha,\beta)\in\mathbb{R}^2\backslash\{(0,0)\}$. So $\Upsilon^0_1$ and $\Upsilon^0_2$ are not cohomologous. This completes the proof.

\end{proof}
%%%%%%%%%%%%%%%%%%%%%%%%%%%%%%%%%%%%%%%%%%%%%%%%%%
\section*{Acknowledgments}
%%%%%%%%%%%%%%%%%%%%%%%%%%%%%%%%%%%%%%%%%%%%%%%%

I would like to  thank Professors  \textbf{Mabrouk Ben Ammar} and \textbf{Boujemaa Agreboui} for helpful suggestions and remarks .

\def\BibTeX{{\rm B\kern-.05em{\sc i\kern-.025em b}\kern-.08em
    T\kern-.1667em\lower.7ex\hbox{E}\kern-.125emX}}


\begin{thebibliography}{99}
%%%%%%%%%%%%%%%%%%%%%%%%%%%%%%%%%%%%%%%%%%%%%%%%%%%%%%%%%%%%%%%
\small

\bibitem{bim}
Agrebaoui, B., Basdouri, I.,  Boujelben, M. (2018). The second cohomology spaces of $\mathcal{K} (1)$ with coefficients in the superspace of weighted densities and Deformations of the superspace of symbols on $\mathbb{S}^{ 1| 1}$. HAL Id: hal-01699198

\bibitem{BN}
Agrebaoui B, Ben Fraj N, On the cohomology of the Lie Superalgebra of contact vector
fields on $S^{1|1},$ {\it Belletin de la Soci\'et\'e Royale des Sciences de Li\`{e}ge}, {\bf 72}, 365?-375, (2004).


\bibitem{imnm}
Basdouri. I, Ben Ammar. M, Ben Fraj. N, Boujelbene. M, Kammoun. K, Cohomology of the Lie superalgebra of contact vector fields on
$\mathbb{K}^{1|1}$ and deformations of the superspace of symbols, {\it J. Nonlinear math. Phys. (2009)}.

\bibitem{Fuchs}
Fuchs, D B, homology of the Lie algebra of vector fields on the line, Func. Anal. Appl., \textbf{14} 201-212 (1980).

\bibitem{hnv}
Gargoubi. H, mellouli. N, Ovsienko. V, Differential operators on supercircle: conformally equivariant quantization and symbol calcuclus
, {\it Lett. Math. Phys. \textbf{79}, 51-65 (2007)}.


\end{thebibliography}
\end{document}